\newtheorem{thm}{Theorem}[section]
\newtheorem{cor}[thm]{Corollary}
\newtheorem{conj}[thm]{Conjecture}
\newtheorem{quest}[thm]{Question}
\theoremstyle{remark}
\newtheorem*{rem}{Remark}
\newcounter{remarkscounter}
\numberwithin{equation}{section}
\newcommand{\A}{\mathbb{A}}
\newcommand{\GL}{\mathrm{GL}}
\newcommand{\SL}{\mathrm{SL}}
\newcommand{\ZZ}{\mathbb{Z}}
\newcommand{\Sym}{\mathrm{Sym}}
\newcommand{\lto}{\longrightarrow}
\newcommand{\CC}{\mathbb{C}}
\newcommand{\sch}{\mathbb{S}}
\newcommand{\la}{\lambda}
\newcommand{\quash}[1]{}
\theoremstyle{definition}
\newtheorem{defn}[thm]{Definition}
\numberwithin{equation}{subsection}
\begin{document}
\title{On tensor third $L$-functions of automorphic representations of $\GL_n(\A_F)$}
\author{Heekyoung Hahn}
\address{Department of Mathematics\\
Duke University\\
Durham, NC 27708}
\email{hahn@math.duke.edu}

\subjclass[2010]{Primary 11F70;  Secondary 11F66, 11E57}

%\thanks{The author is thankful for partial support provided by NSF grant DMS-1405708.  Any opinions, findings, and conclusions or recommendations expressed in this material are those of the author and do not necessarily reflect the views of the National Science Foundation.}

\maketitle
\begin{abstract}

Langlands' beyond endoscopy proposal for establishing functoriality motivates interesting and concrete problems in the representation theory of algebraic groups.  We study these problems in a setting related to the Langlands $L$-functions $L(s,\pi,\,\otimes^3),$ where $\pi$ is a cuspidal automorphic representation of $\GL_n(\A_F)$ where $F$ is a global field. 

\end{abstract}
%\tableofcontents

%%%%%%%%%%%%%%%%%%%%%%%%%%%%%%%%%%%%%%%%%%%%%%%%%%%%%%%%%%%%%%%%%%%
%%%%%%%%%%%%%%%%%%%%%%%%%%%%%%%%%%%%%%%%%%%%%%%%%%%%%%%%%%%%%%%%%%%%%%%%%%%
\section{Introduction}

Let $F$ be a number field and let $\A_F$ be adeles of $F$. Let $G$ be a reductive group over $F$. Given a representation $${}^LG\lto \GL_n(\CC),$$ Langlands' functorial conjectures \cite{Langlands_conj} predict there should be a corresponding transfer of automorphic representations of $G(\A_F)$ to automorphic representations of $\GL_n(\A_F)$.

One can ask for a characterization of those automorphic representations in the image.  By the conjectural Langlands correspondence, these should correspond to $L$-parameters
$$
\varphi: \mathcal{L}_F\lto \GL_n(\CC)
$$
such that a $\GL_n(\CC)$-conjugate factors through ${}^LG$, where $\mathcal{L}_F$ is the conjectural Langlands group.  From this optic, one is led to ask how one can detect such parameters.  

Let
${}^{\lambda}G$ be the Zariski closure of $\mathrm{Im}({}^LG)$ viewed as a reductive group over $\CC$.
Then a theorem of Chevalley  states that there exists a representation $\GL_n \lto \GL(V)$ such that ${}^{\lambda}G$ is the stabilizer of a line in $V,$ see \cite{Milne}.  Moreover, one knows via a result of  Larsen and Pink \cite{LP} that if ${}^{\lambda}G$ is connected semisimple, then the dimensions $\dim (V^{{}^{\lambda}G})$ for all representations $V$ of $\GL_n$ characterize ${}^{\lambda}G$ up to conjugation.  Thus the following definition is natural:

\begin{defn}
Let $G$ be an irreducible reductive subgroup of $\GL_n$. We say a representation $r:\GL_n \lto \GL(V_r)$ \textit{detects} $G$ if $G$ fixes a line in $V_r$.
\end{defn}

\begin{rem}
If $G$ is connected  then $r$ detects $G$ if and only if it detects $G^{\mathrm{der}}$.
\end{rem}

\noindent 

The following conjecture is the crux of Langlands' beyond endoscopy proposal \cite{Langlands_beyond}, which aims to prove Langlands functoriality in general:

\begin{conj}\label{conj}
Let $\pi$ be a unitary cuspidal automorphic representation of $\GL_n(\A_F).$ If $\pi$ is a functorial transfer from $G$, then $L(s,\pi,r \otimes \chi)$ has a pole at $s=1$ for some character $\chi \in F^{\times} \backslash \A_F^{\times} \to \CC^\times$ whenever $r$ detects ${}^{\lambda}G.$
\end{conj}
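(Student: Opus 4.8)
The plan is to route the statement through the (conjectural) Langlands correspondence so that the analytic assertion becomes a statement about invariant vectors, and then to invoke the expected description of poles of Langlands $L$-functions at $s=1$. First I would unwind the hypothesis that $\pi$ is a functorial transfer from $G$ at the level of parameters: by the correspondence $\pi$ has an $L$-parameter $\varphi_\pi\colon \mathcal{L}_F \lto \GL_n(\CC)$, and being a transfer from $G$ means that a $\GL_n(\CC)$-conjugate of $\varphi_\pi$ factors through ${}^LG$. Hence, after conjugation, $\mathrm{Im}(\varphi_\pi)$ is contained in the Zariski closure ${}^{\lambda}G$ of $\mathrm{Im}({}^LG)$. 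This is the step that converts the automorphic input into a purely group-theoretic containment.

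Next I would exploit the detection hypothesis together with the definition. Since $r$ detects ${}^{\lambda}G$, the group ${}^{\lambda}G$ stabilizes a line $\ell \subseteq V_r$, acting on it through a character $\psi\colon {}^{\lambda}G \lto \CC^\times$ whose restriction to $({}^{\lambda}G)^{\mathrm{der}}$ is forced to be trivial. Restricting along $\varphi_\pi$, the one-dimensional subspace $\ell$ of $r \circ \varphi_\pi$ is acted on by $\mathcal{L}_F$ through $\psi \circ \varphi_\pi$, which, being one-dimensional, corresponds under class field theory to a genuine Hecke character of $F^\times \backslash \A_F^\times$. Taking $\chi$ to be its inverse, the twisted representation $(r \circ \varphi_\pi)\otimes \chi$ contains the trivial representation of $\mathcal{L}_F$ with positive multiplicity. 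This is precisely where the twist in the statement is needed: $\psi$ may be nontrivial, and $\chi$ absorbs it.

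The analytic conclusion would then come from the conjectural description of $L(s,\pi,r\otimes\chi) = L\bigl(s,(r\circ\varphi_\pi)\otimes\chi\bigr)$, whereby the order of the pole at $s=1$ equals the multiplicity of the trivial representation in $(r\circ\varphi_\pi)\otimes\chi$. Since that multiplicity is positive by the previous step, the $L$-function acquires a pole at $s=1$, as desired.

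The hard part — and the reason this remains a conjecture rather than a theorem — is exactly this last analytic input. For general $r$ one does not even have meromorphic continuation and a functional equation for $L(s,\pi,r)$, let alone the statement that the order of the pole at $s=1$ is governed by invariant vectors; establishing these is essentially tantamount to the functoriality one is trying to prove. A realistic line of attack is therefore to specialize to situations where the analytic theory is already in hand, for instance where $(r\circ\varphi_\pi)\otimes\chi$ can be realized through known Rankin--Selberg or Langlands--Shahidi $L$-functions, or through established functorial lifts. It is in this setting that the representation-theoretic questions about $\otimes^3$ developed in this paper — deciding which subgroups of $\GL_n$ are detected by $\otimes^3$ and computing the relevant invariant dimensions $\dim\bigl(V^{{}^{\lambda}G}\bigr)$ — become the essential ingredient.
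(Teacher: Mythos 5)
The statement you were asked about is stated in the paper as a \emph{conjecture}, and the paper offers no proof of it --- it is presented as the motivating open problem of Langlands' beyond endoscopy program, with only the special cases $r=\Sym^2$ and $r=\mbox{\Large $\wedge$}^2$ known (via Arthur, Cogdell--Kim--Piatetski-Shapiro--Shahidi, and Ginzburg--Rallis--Soundry). So there is no proof in the paper to compare yours against. What you have written is the standard conditional heuristic, and as such it is accurate: you correctly translate ``transfer from $G$'' into the containment of $\mathrm{Im}(\varphi_\pi)$ in ${}^{\lambda}G$ after conjugation, correctly observe that the detected line is only an eigenline on which ${}^{\lambda}G$ acts through a character $\psi$ (trivial on the derived group), and correctly identify that the twist by $\chi$ in the statement exists precisely to absorb the Hecke character corresponding to $\psi\circ\varphi_\pi$. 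This is a useful explanation of \emph{why} the conjecture is formulated the way it is.

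But it is not a proof, and you should be explicit that it cannot be turned into one by present methods: every load-bearing step is itself conjectural --- the existence of $\mathcal{L}_F$, the attachment of $\varphi_\pi$ to $\pi$, the identification of one-dimensional representations of $\mathcal{L}_F$ with Hecke characters, and above all the assertion that $\mathrm{ord}_{s=1}L(s,\pi,r\otimes\chi)$ equals the multiplicity of the trivial representation in $(r\circ\varphi_\pi)\otimes\chi$ (for general $r$ one does not even know meromorphic continuation). You acknowledge this in your final paragraph, which is the right instinct; the only correction needed is to present the argument as a motivation for the conjecture rather than as a proof attempt. The unconditional content of the paper lies entirely on the algebraic side of Question \ref{Q} --- determining which subgroups are detected by $\otimes^3$ --- and that is where your last paragraph correctly points.
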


Thus implicit in Langlands proposal is a very concrete question in algebraic group theory.  
\begin{quest}\label{Q}
Given a representation 
$$
r:\GL_n \lto \GL(V_r)
$$
which algebraic subgroups of $\GL_n$ are detected by $r$?
\end{quest}

If $r=\Sym^2$, one knows that every irreducible reductive subgroup of $\GL_n$ detected by $r$ is conjugate to a subgroup of $\mathrm{GO}_n$. Moreover, in this case the Conjecture \ref{conj} is proven by work of Arthur \cite{Arthur}, work of Cogdell, Kim, Piatetski-Shapiro and Shahidi \cite{CKPSS} and work of Ginzburg, Rallis and Soundry \cite{GRS}.  There is a similar statement for $r=\mbox{\Large $\wedge$}^2$.

Apart from these special cases, explicit results are hard to come by.  We mention one case that was discussed in a recent paper of Getz and Klassen \cite{GK}.
Let 
$$
RS: \GL_m\times\GL_m \hookrightarrow \GL_{m^2}
$$
be the representation induced by the usual tensor product. Then it is known that $RS(\GL_m\times\GL_m)$ is detected by $\mathrm{Sym}^m$ (see \cite{GK} for instance). Moreover, the analytic properties of the relevant Langlands  $L$-function is well understood via the Rankin-Selberg theory, although we do not know its automorphicity.
It is important to note that even in this situation Getz and Klassen do not discuss whether there are any other maximal subgroups of $\GL_{m^2}$ that are detected  by $\mathrm{Sym}^m$. 

\begin{rem}
Let $\pi_1$ and $\pi_2$ be cuspidal automorphic representations of $\GL_m(\A_F)$.   Then one knows that
$$
L(s,\pi_1 \times \pi_2,RS)=L(s,\pi_1 \times \pi_2).
$$
Here the function on the left is the Langlands $L$-function and the function on the right is the usual Rankin-Selberg $L$-function.
\end{rem}

In this paper, we examine subrepresentations of the representation 
$$
\otimes^3:\GL_n \lto \GL_{n^3}.
$$
In some sense this is the easiest case to consider after $r=\mathrm{Sym}^2$ and $r=\mbox{\Large$\wedge$}^2$.  This coincides with the setting of \cite{GK} when we take $m=3$ and $n=9$.  In this case we can obtain much more information than what is proven in loc.~cit (see Thereom \ref{thm-GL9-intro} below).

Our first result is a lower bound on $\mathbb{S}_{\la}(\SL_m)$ not to be detected by $\otimes^3$ with respect to the number of nonzero parts of a partition $\la$. Here $\mathbb{S}_{\la}$ is the usual Schur functor associated to partition $\la$.

%%%%%%%%%%%%%%%%%
\begin{thm}\label{thm-Schur-intro}
Let $\lambda$ be a partition with $\ell\leq m$ nonzero parts. If $m> 3\ell$, then the representation $\otimes^3$ does not detect  $\mathbb{S}_{\lambda}(\SL_m)$.
\end{thm}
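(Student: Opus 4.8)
The plan is to translate the detection condition into a purely representation-theoretic question about the tensor cube and then resolve it by a row-counting argument. By definition $\otimes^3$ detects $\mathbb{S}_\lambda(\SL_m)\subseteq\GL_n$ (with $n=\dim\mathbb{S}_\lambda\CC^m$) if and only if the image fixes a line in $(\CC^n)^{\otimes 3}=(\mathbb{S}_\lambda\CC^m)^{\otimes 3}$. Since $\SL_m$ is connected and semisimple it has no nontrivial characters, so any line it stabilizes is fixed pointwise. Hence the first step is the reduction: $\otimes^3$ detects $\mathbb{S}_\lambda(\SL_m)$ if and only if the trivial representation of $\SL_m$ occurs as a summand of $(\mathbb{S}_\lambda\CC^m)^{\otimes 3}$.

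Next I would pass to $\GL_m$ and decompose the tensor cube into irreducible polynomial representations. Viewing $\mathbb{S}_\lambda\CC^m$ as the irreducible $\GL_m$-module of highest weight $\lambda$, write
$$(\mathbb{S}_\lambda\CC^m)^{\otimes 3}=\bigoplus_\mu N^\mu_\lambda\,\mathbb{S}_\mu\CC^m,\qquad N^\mu_\lambda=\sum_\nu c^\nu_{\lambda\lambda}\,c^\mu_{\nu\lambda},$$
where the $c$'s are Littlewood--Richardson coefficients. The key translation is that $\mathbb{S}_\mu\CC^m$ restricts to the trivial $\SL_m$-module precisely when $\mu$ is a rectangle $(c^m)$ with $m$ equal rows, in which case $\mathbb{S}_{(c^m)}\CC^m=(\det)^{\otimes c}$; any such $\mu$ with $c\geq 1$ has exactly $m$ nonzero parts, and matching degrees forces $cm=3|\lambda|$. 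Thus detection is equivalent to the appearance of some constituent $\mathbb{S}_\mu\CC^m$ with $\ell(\mu)=m$.

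The crux is then to bound the number of rows of the constituents. I would use the fact that the Littlewood--Richardson coefficient $c^\nu_{\alpha\beta}$ vanishes unless $\ell(\nu)\leq\ell(\alpha)+\ell(\beta)$; this follows from the LR rule by reading the first column of an LR filling of $\nu/\alpha$, whose $\ell(\nu)-\ell(\alpha)$ strictly increasing entries lie in $\{1,\dots,\ell(\beta)\}$ and so give $\ell(\nu)-\ell(\alpha)\leq\ell(\beta)$. Applying this to the two products defining $N^\mu_\lambda$ shows that every $\mu$ with $N^\mu_\lambda\neq 0$ satisfies $\ell(\mu)\leq 3\ell(\lambda)=3\ell$. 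Since by hypothesis $m>3\ell$, no constituent can have $m$ rows; in particular no determinant power occurs, so by the reduction above $\otimes^3$ does not detect $\mathbb{S}_\lambda(\SL_m)$.

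I expect the only real content to be the row-additivity bound for Littlewood--Richardson coefficients together with the identification of the trivial $\SL_m$-isotypic piece with determinant powers; once these are in place the inequality $m>3\ell$ closes the argument at once. A minor point to dispatch is the degenerate case $\lambda=\emptyset$ (that is, $\ell=0$), which I would exclude at the outset since there $\mathbb{S}_\lambda(\SL_m)$ is trivial and is tautologically detected.
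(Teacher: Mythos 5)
Your proposal is correct and is essentially the paper's argument: both rest on the Littlewood--Richardson row-additivity bound and close with the same inequality $m>\ell+2\ell$. The only (cosmetic) difference is that you decompose the full tensor cube and look for determinant powers $\mathbb{S}_{(c^m)}(V)$ with $m$ rows, whereas the paper dualizes one factor, applies Littlewood--Richardson only to $\mathbb{S}_{\lambda}(V)\otimes\mathbb{S}_{\lambda}(V)$ (constituents with at most $2\ell$ rows), and observes that $\lambda^{\vee}+b$ has at least $m-\ell$ rows.
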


Theorem \ref{thm-Schur-intro} leads us to ask whether $\mathbb{S}_{\la}(\SL_m)$ is detected by $\otimes^3$ if $\ell\leq m\leq 3\ell$. We consider only the simplest case where $\la=(k)$ (see \S \ref{Schur} for notation). So $\mathbb{S}_{\la}=\Sym^k$. In this case, $\ell=1$ so we ought to study when $m=2$ and $m=3$.

%%%%%%%%%%%%%%%
\begin{thm}\label{thm-A1-intro}
Let 
$$G:=\Sym^{n-1}(\SL_2)\hookrightarrow \GL_n.$$ 
The representation $\Sym^3$ detects $G$
if and only if $n\equiv 1\pmod{4}$.
\end{thm}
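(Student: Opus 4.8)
The plan is to translate the detection question into a plethysm computation and then settle it with classical binary-form invariant theory. First I would reduce the problem: since $\SL_2$ is semisimple it has no nontrivial characters, so a line is $G$-stable precisely when it is pointwise fixed. Hence $\Sym^3$ detects $G$ if and only if the trivial representation occurs as a summand of $\Sym^3(\CC^n)$. As an $\SL_2$-module the standard representation of $G$ is $\CC^n\cong\Sym^{n-1}(\CC^2)$, and everything is completely reducible, so the entire question becomes whether
$$
\dim\big(\Sym^3(\Sym^{n-1}(\CC^2))\big)^{\SL_2}\ >\ 0 .
$$

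The key step is Hermite reciprocity, which gives an isomorphism of $\SL_2$-modules
$$
\Sym^3(\Sym^{n-1}(\CC^2))\ \cong\ \Sym^{n-1}(\Sym^3(\CC^2)).
$$
This trades the fixed outer exponent $3$ for the fixed inner one, turning the problem into understanding the $\SL_2$-invariants of the symmetric powers of the space $\Sym^3(\CC^2)$ of binary cubic forms. Concretely, the graded ring $\bigoplus_{d\ge 0}(\Sym^{d}(\Sym^3(\CC^2)))^{\SL_2}$ is the ring of $\SL_2$-invariants of binary cubics.

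Now I would invoke the classical fact that this invariant ring is a polynomial ring $\CC[\Delta]$ on a single generator, the discriminant, of degree $4$ in the coefficients of the cubic. Consequently $(\Sym^{d}(\Sym^3(\CC^2)))^{\SL_2}$ is one-dimensional when $4\mid d$ and zero otherwise. Taking $d=n-1$ shows that the trivial representation occurs in $\Sym^3(\CC^n)$ — with multiplicity exactly one — if and only if $4\mid(n-1)$, that is $n\equiv 1\pmod 4$, which proves both directions simultaneously.

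The only genuine content to pin down is the structure of the invariant ring of binary cubics together with the precise form of Hermite reciprocity for $\SL_2$; both are standard, so I expect the main obstacle to be expository rather than mathematical. As a consistency check, the central element $-I\in\SL_2$ acts on $\Sym^3(\Sym^{n-1}(\CC^2))$ by $(-1)^{3(n-1)}=(-1)^{n-1}$, so any nonzero invariant already forces $n$ odd; the finer split between $n\equiv1$ and $n\equiv3\pmod4$ is exactly what the degree-$4$ generator records. If one wished to avoid citing invariant theory altogether, the alternative is a direct computation of the character of $\Sym^3(\Sym^{n-1}(\CC^2))$ via the formula $\chi_{\Sym^3}(g)=\tfrac16(\chi(g)^3+3\chi(g)\chi(g^2)+2\chi(g^3))$, followed by extracting the multiplicity of the trivial module as $a_0-a_2$ from the resulting Laurent polynomial; this is correct but more laborious and less transparent than the reciprocity argument.
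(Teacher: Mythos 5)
Your argument is correct, and it takes a genuinely different route from the paper. The paper embeds $\Sym^3 W$ (with $W=\Sym^{n-1}(\CC^2)$) into $\mathrm{Hom}(W^{\vee},\Sym^2 W)\cong W\otimes\Sym^2 W$, applies the $j=2$ plethysm $\Sym^2(\Sym^k V)\cong\bigoplus_{i}\Sym^{2k-4i}V$, and then uses Schur's lemma to see that a copy of $W^{\vee}\cong W$ appears in $\Sym^2 W$ exactly when $4\mid k=n-1$. You instead keep the symmetric cube intact, apply Hermite reciprocity $\Sym^3(\Sym^{n-1}\CC^2)\cong\Sym^{n-1}(\Sym^3\CC^2)$, and quote the classical fact that the invariant ring of binary cubics is $\CC[\Delta]$ with $\deg\Delta=4$. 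The two are closely related --- the paper's Corollary on partition identities is essentially the statement that $N(3,k,3k/2)$ equals $1$ or $0$ according to $k$ mod $4$, which is exactly your graded dimension of $\CC[\Delta]$ read through reciprocity --- but your version buys two things. First, it gives the precise multiplicity (one) of the trivial representation in $\Sym^3(\CC^n)$, not just its existence. Second, and more substantively, it works directly inside $\Sym^3 W$: the paper's computation locates an invariant in the larger module $W\otimes\Sym^2 W=\Sym^3 W\oplus\mathbb{S}_{(2,1)}(W)$, so its ``if'' direction strictly still requires checking that the invariant lies in the summand $\Sym^3 W$ rather than in $\mathbb{S}_{(2,1)}(W)$; your reciprocity argument sidesteps that issue entirely. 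The one point you should make explicit when writing this up is the initial reduction from ``fixes a line'' to ``fixes a vector,'' which you correctly justify by the perfectness of $\SL_2$ (hence of its image $G$), and precise references for Hermite reciprocity and for the invariant ring of the binary cubic.
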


Theorem \ref{thm-A1-intro} admits a pleasant partition-theoretic interpretation. Recall the following partition function: For fixed $j, k\in \mathbb{N}$, let $p(k, j, n)$ be the number of partitions of $n$ into at most $j$ parts, with largest part at most $k$. Then we have the following:

\begin{cor}\label{cor-partition-intro}
For any integer $\ell\geq 1$, one has 
\begin{equation}\label{id-1}
p(4\ell-2, 3, 6\ell-3)=p(4\ell-2, 3, 6\ell-4)
\end{equation}and
\begin{equation}\label{id-2}
p(4\ell, 3, 6\ell)-p(4\ell, 3, 6\ell-1)=1.
\end{equation}
\end{cor}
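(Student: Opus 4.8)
The plan is to reinterpret Theorem~\ref{thm-A1-intro} as the computation of a single $\SL_2$-invariant multiplicity and then to read off the two identities as the two relevant residue classes of $n$ modulo $4$. First I would record what detection means representation-theoretically. Under the embedding $G=\Sym^{n-1}(\SL_2)\hookrightarrow\GL_n$ the standard representation $\CC^n$ of $\GL_n$ restricts to the irreducible $\SL_2$-module $V_d:=\Sym^{d}(\CC^2)$ with $d=n-1$. Since $\SL_2$ is perfect, fixing a line is the same as fixing a nonzero vector, so $\Sym^3$ detects $G$ if and only if the multiplicity
$$
m:=\dim\bigl(\Sym^3(V_d)\bigr)^{\SL_2}
$$
of the trivial representation in $\Sym^3(V_d)$ is positive.

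Next I would compute $m$ through weights. For $\SL_2$ the multiplicity of the trivial representation in any module equals $m_0-m_2$, where $m_k$ denotes the dimension of the weight-$k$ space. The weights of $V_d$ are $d-2i$ for $0\leq i\leq d$, so a weight basis of $\Sym^3(V_d)$ is indexed by size-$3$ multisets $\{d-2i_1,d-2i_2,d-2i_3\}$, the corresponding weight being $3d-2(i_1+i_2+i_3)$. For $d$ even, substituting $w=d-2i$ turns the count of such multisets of total weight $0$ (resp.\ $2$) into the count of partitions of $3d/2$ (resp.\ $3d/2-1$) into at most three parts each at most $d$. Hence
$$
m_0=p(d,3,3d/2),\qquad m_2=p(d,3,3d/2-1),\qquad m=p(d,3,3d/2)-p(d,3,3d/2-1),
$$
which is precisely the $j=3$ instance of the Cayley--Sylvester formula for the dimension of the space of cubic invariants of a binary $d$-ic.

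Now I would specialize to the two residue classes. Taking $d=4\ell-2$ (so $n=4\ell-1\equiv3\pmod 4$) gives $3d/2=6\ell-3$, and Theorem~\ref{thm-A1-intro} asserts that $\Sym^3$ does \emph{not} detect $G$, i.e.\ $m=0$; this is exactly \eqref{id-1}. Taking $d=4\ell$ (so $n=4\ell+1\equiv1\pmod4$) gives $3d/2=6\ell$, and the theorem asserts detection, i.e.\ $m\geq1$; identity \eqref{id-2} is then the sharper statement that $m=1$.

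The main obstacle is precisely this upgrade from $m\geq 1$ to $m=1$ needed for \eqref{id-2}, since detection alone yields only positivity. I would close the gap with the \emph{uniqueness} of the cubic invariant: by the symbolic method the only bracket monomial of degree $3$ and order $0$ attached to a binary $d$-ic is $[\alpha\beta]^{d/2}[\beta\gamma]^{d/2}[\gamma\alpha]^{d/2}$, whence $m\leq1$ for every even $d$; moreover this monomial is multiplied by the sign $(-1)^{3d/2}$ under transposing two symbolic letters, so symmetry of the form forces it to vanish unless $d\equiv0\pmod4$. This simultaneously recovers the detection dichotomy and pins $m=1$ in \eqref{id-2} while $m=0$ in \eqref{id-1}. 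I expect this to coincide with the mechanism behind the proof of Theorem~\ref{thm-A1-intro}, so that both identities fall directly out of the exact multiplicity computed there.
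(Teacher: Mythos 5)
Your argument is correct, and while it shares the paper's overall strategy --- both proofs reduce \eqref{id-1} and \eqref{id-2} to computing the exact multiplicity $m=\dim\bigl(\Sym^3(\Sym^{d}\CC^2)\bigr)^{\SL_2}$ and identifying $m$ with $p(d,3,3d/2)-p(d,3,3d/2-1)$ --- you execute both key steps differently. For the identification, the paper goes through Dolgachev's plethysm formula (Theorem \ref{Pleth}): the trivial summand is the term of \eqref{Pleth-eqn} with $3k-2w=0$, and its multiplicity $N(3,k,3k/2)$ is by definition a difference of two coefficients of the Gaussian polynomial, hence of two values of $p$ via \eqref{partition-eqn}; you instead count weight vectors directly ($m=m_0-m_2$, the Cayley--Sylvester computation), which is more elementary and self-contained. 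For the exact value of $m$, you correctly isolated the one genuine subtlety --- Theorem \ref{thm-A1-intro} gives only $m\geq 1$ in the detected case, while \eqref{id-2} needs $m=1$ --- and you close the gap with the symbolic method (uniqueness of the degree-$3$, order-$0$ bracket monomial gives $m\leq 1$), whereas the paper gets the same upper bound more cheaply from \eqref{hom}: Schur's lemma applied to $\mathrm{Hom}(W^{\vee},\Sym^2W)\supseteq\Sym^3W$ together with the multiplicity-free decomposition \eqref{free}. Your route costs you an appeal to the first fundamental theorem for $\SL_2$-invariants where the paper needs only Schur's lemma, but it buys a derivation of the partition identity that does not depend on the plethysm theorem, and it even reproves the detection dichotomy independently via the sign $(-1)^{3d/2}$.
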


The proof of Corollary \ref{cor-partition-intro} relies on representation theory. It would be interesting to find combinatorial proofs of \eqref{id-1} and \eqref{id-2}. A. J. Yee pointed out the author that one might deduce the identities by employing symmetries of the coefficients of Gaussian polynomials, but the details are not obvious.

For the case $m=3$,  we set
\begin{equation}\label{n}
n=\frac{(k+2)(k+1)}{2},
\end{equation}
for $k\in \mathbb{N}$. Then $ \Sym^k(\SL_3)$ is naturally an irreducible reductive subgroup of $\GL_n$. One has the following result:

%%%%%%%%%%%%%%%%
\begin{thm}\label{thm-A2-intro}  Let $n$ be given as in \eqref{n} and let
$$G:=\Sym^k(\SL_3)\hookrightarrow \GL_n.$$ 
Then for any $k>0$,  the representation $\otimes^3$ detects $G$.
\end{thm}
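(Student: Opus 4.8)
The plan is to unwind the definition of \emph{detection} and reduce the statement to the existence of a single invariant vector. Write $V=\CC^3$ for the standard representation of $\SL_3$, so that $\Sym^k V$ is the space on which $G=\Sym^k(\SL_3)$ acts, with $\dim \Sym^k V=\binom{k+2}{2}=n$ as in \eqref{n}. Applying $\otimes^3$ to $G$ yields the action of $\SL_3$ on $(\Sym^k V)^{\otimes 3}$, and by definition $\otimes^3$ detects $G$ precisely when $G$ fixes a line in this space. Since $\SL_3$ is semisimple---indeed perfect, hence has no nontrivial characters---any $G$-fixed line must carry the trivial representation, and conversely a copy of the trivial representation supplies such a line. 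Thus the theorem is equivalent to the assertion that $\one$ occurs in $(\Sym^k V)^{\otimes 3}$.

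To produce such an invariant I would appeal to classical invariant theory for $\SL_3$. First I note that the multiplicity of $\one$ in $(\Sym^k V)^{\otimes 3}$ equals its multiplicity in the contragredient $\big((\Sym^k V)^{\otimes 3}\big)^{\ast}\cong (\Sym^k V^{\ast})^{\otimes 3}$, because $\one$ is self-dual. I then identify $(\Sym^k V^{\ast})^{\otimes 3}$ with the multidegree-$(k,k,k)$ component of the polynomial ring $\CC[x,y,z]$ in three vectors of variables $x=(x_1,x_2,x_3)$, $y=(y_1,y_2,y_3)$, $z=(z_1,z_2,z_3)$, on which $\SL_3$ acts diagonally. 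By the First Fundamental Theorem for $\SL_3$, the ring of $\SL_3$-invariants is generated by the single bracket $\det(x,y,z)$, the determinant of the $3\times 3$ matrix with columns $x,y,z$. Consequently $\det(x,y,z)^k$ is a nonzero invariant that is exactly multihomogeneous of degree $(k,k,k)$, so it spans a (one-dimensional) space of invariants in $(\Sym^k V^{\ast})^{\otimes 3}$. This shows $\one$ occurs, and hence $\otimes^3$ detects $G$ for every $k>0$.

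The hard part---and the only point that really needs care in the writing---is the passage between $V$ and $V^{\ast}$: for $\SL_3$ the standard and dual representations are \emph{not} isomorphic, so $\det(x,y,z)^k$ a priori lives in $(\Sym^k V^{\ast})^{\otimes 3}$ rather than in $(\Sym^k V)^{\otimes 3}$ itself. The self-duality argument above circumvents this cleanly, but one could instead exhibit the invariant directly: let $\epsilon=\sum_{\sigma\in S_3}\mathrm{sgn}(\sigma)\, e_{\sigma(1)}\otimes e_{\sigma(2)}\otimes e_{\sigma(3)}\in V^{\otimes 3}$ be the invariant volume tensor, regroup $\epsilon^{\otimes k}\in V^{\otimes 3k}$ by collecting the $i$-th leg of each of the $k$ factors into the $i$-th group for $i=1,2,3$, and project each group onto $\Sym^k V$. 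The resulting vector is $\SL_3$-invariant, and its nonvanishing follows by pairing it against $\det(x,y,z)^k$ (equivalently, it is the image of $\det^k$ under the self-duality identification). Either route settles the claim with no restriction on $k$, in contrast to the $\SL_2$ situation of Theorem \ref{thm-A1-intro}.
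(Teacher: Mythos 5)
Your proof is correct, but it takes a genuinely different route from the paper's. The paper works entirely inside the Schur-functor formalism: it writes $(\Sym^k V)^{\otimes 3}\cong \mathrm{Hom}((\Sym^k V)^{\vee},\Sym^k V\otimes\Sym^k V)$, decomposes $\Sym^k V\otimes\Sym^k V=\bigoplus_{i=0}^{k}\sch_{(k+i,k-i)}(V)$ by Pieri's formula, and observes that the $i=0$ summand $\sch_{(k,k)}(V)$ is exactly $(\Sym^k V)^{\vee}$ (the partition dual to $(k,0,0)$ for $\SL_3$ being $(k,k,0)$), so Schur's lemma produces the required trivial constituent. You instead pass to the contragredient, identify $(\Sym^k V^{\ast})^{\otimes 3}$ with the multidegree-$(k,k,k)$ part of $\CC[V^{\oplus 3}]$, and invoke the First Fundamental Theorem for $\SL_3$ to exhibit the invariant $\det(x,y,z)^k$ explicitly. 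Your handling of the one delicate point --- that $V\not\cong V^{\ast}$ for $\SL_3$, so the invariant naturally lives on the dual side --- is sound, since the multiplicity of $\one$ in a completely reducible representation equals its multiplicity in the dual. What your approach buys is an explicit invariant vector and the sharper statement that the multiplicity of $\one$ in $(\Sym^k V)^{\otimes 3}$ is exactly one (the degree-$(k,k,k)$ piece of $\CC[\det]$ is one-dimensional), neither of which the paper records; what the paper's approach buys is uniformity with the Littlewood--Richardson/Pieri machinery used in its other sections, which adapts to Schur functors $\sch_{\la}$ beyond $\la=(k)$ where no single generating bracket is available.
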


Finally, we examine closely irreducible connected reductive subgroups of $\GL_9$ that are detected by $\Sym^3$. They are completley classified via the following theorem:

%%%%%%%%%%
\begin{thm}\label{thm-GL9-intro}
Let $G\leq \GL_9$ be an irreducible connected reductive subgroup of $\GL_9$. Then
if the representation $\Sym^3$ detects $G$, then either $\mathrm{Lie}(G^\mathrm{der})=\mathfrak{sl}_2$, or a $\GL_9$-conjugate of $G$ is contained in $RS(\GL_3\times\GL_3)$.
\end{thm}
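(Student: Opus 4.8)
The plan is to reduce to the derived group and then run a finite classification. Write $H:=G^{\mathrm{der}}$, a connected semisimple group, and let $W=\CC^9$ be the standard representation of $\GL_9$ restricted to $G$. Since $G$ is irreducible, $W$ is an irreducible $G$-module, and because the connected center of $G$ acts by a scalar (Schur), $W$ stays irreducible on restriction to $H$; in particular $W$ being a $9$-dimensional irreducible forces $H$ to be a nontrivial semisimple group. By the Remark following the definition of detection, $\Sym^3$ detects $G$ if and only if $H$ fixes a line in $\Sym^3 W$, i.e. $(\Sym^3 W)^H\neq 0$. Moreover the central torus of $G$ acts on $W$ by scalars, and scalars lie in $RS(\GL_3\times\GL_3)$, so once $H$ is conjugated into $RS(\GL_3\times\GL_3)$ the whole of $G$ follows. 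Thus it suffices to prove: if $H$ is connected semisimple and $W$ is a $9$-dimensional irreducible $H$-module with $(\Sym^3 W)^H\neq 0$, then either $\mathrm{Lie}(H)=\mathfrak{sl}_2$ or $H$, in its action on $W$, is conjugate into $RS(\GL_3\times\GL_3)$.

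The first main step is to enumerate \emph{all} pairs $(H,W)$ with $H$ connected semisimple and $W$ irreducible of dimension $9$. Writing $H$ as an almost-direct product $H_1\times\cdots\times H_r$ of simple factors, $W$ is an outer tensor product of nontrivial irreducibles $W_i$, so $\dim W_1\cdots\dim W_r=9$ with each $\dim W_i\geq 2$. As $9=3\cdot 3$ is the only factorization into integers $\geq 2$, there are at most two factors. For a single factor of dimension $9$, the classification of low-dimensional irreducibles (via the Weyl dimension formula) leaves exactly three possibilities: $\mathfrak{sl}_2$ with $W=\Sym^8$, $\mathfrak{sl}_9$ with $W$ the standard representation (or its dual), and $\mathfrak{so}_9$ with $W$ the standard representation. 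For two factors one needs $\dim W_1=\dim W_2=3$, and the only simple Lie algebras with a $3$-dimensional irreducible are $\mathfrak{sl}_2$ (via $\Sym^2$) and $\mathfrak{sl}_3$ (via the standard representation or its dual); this yields $\mathfrak{sl}_2\times\mathfrak{sl}_2$, $\mathfrak{sl}_2\times\mathfrak{sl}_3$, and $\mathfrak{sl}_3\times\mathfrak{sl}_3$, each acting by the tensor product of $3$-dimensional representations.

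It then remains to verify the dichotomy on this finite list. The single $\mathfrak{sl}_2$-case meets the first alternative tautologically. All three two-factor cases meet the second: since $\Sym^2\colon\SL_2\to\mathrm{SO}_3\subset\GL_3$ and the standard representations realize each factor inside $\GL_3$, the image of $H$ on $W=\CC^3\otimes\CC^3$ lies in $RS(\GL_3\times\GL_3)$, and indeed these are detected, the determinant of $3\times 3$ matrices supplying the invariant cubic (consistent with \cite{GK}). The crux is to eliminate the two remaining simple cases $\mathfrak{sl}_9$ and $\mathfrak{so}_9$, which are neither of type $\mathfrak{sl}_2$ nor small enough to embed in $RS(\GL_3\times\GL_3)$ (whose dimension is at most $18$, against $\dim\SL_9=80$ and $\dim\mathrm{SO}_9=36$); here I must show their standard representations carry no invariant cubic. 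For $\SL_9$ this is immediate, as $\Sym^3 W$ is the irreducible nontrivial module of highest weight $3\omega_1$; for $\mathrm{SO}_9$ one invokes the harmonic decomposition $\Sym^3 W\cong\mathcal H^3\oplus q\cdot W$, whose summands are the nontrivial irreducibles of highest weight $3\omega_1$ and $\omega_1$, so no trivial constituent occurs. This rules out detection in both cases and closes the argument. I expect the genuine obstacle to be the exhaustiveness of the enumeration in the second step: one must be certain that no $9$-dimensional irreducible of a semisimple group has been overlooked, after which the verification that the $3\times 3$ determinant accounts for every detected case while $\SL_9$ and $\mathrm{SO}_9$ are excluded is clean.
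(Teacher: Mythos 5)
Your proposal is correct and follows the same overall strategy as the paper: reduce to $H=G^{\mathrm{der}}$ using Schur's lemma on the center, enumerate the connected semisimple groups with a $9$-dimensional irreducible representation, and check the dichotomy case by case (with $\mathfrak{so}_9$ eliminated because $\Sym^3$ of its standard representation has no trivial constituent). The one substantive difference is that your enumeration is more complete than the paper's: the paper cites a dimension table and lists only $\mathfrak{sl}_2$, $\mathfrak{sl}_3\times\mathfrak{sl}_3$, $\mathfrak{so}_3\times\mathfrak{so}_3$ and $\mathfrak{so}_9$, omitting both $\mathfrak{sl}_9$ acting on its standard representation and the mixed case $\mathfrak{sl}_2\times\mathfrak{sl}_3$ acting on $\CC^3\otimes\CC^3$. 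You correctly include these and dispose of them — $\SL_9$ is not detected since $\Sym^3\CC^9$ is irreducible and nontrivial, and the mixed case lands in $RS(\GL_3\times\GL_3)$ just like the other two-factor cases — so your argument actually patches a small gap in the published proof. Your derivation of the list from the factorization $9=3\cdot 3$ and the minimal dimensions of irreducibles of simple Lie algebras is also self-contained where the paper relies on a citation, and your harmonic-decomposition argument for $\mathfrak{so}_9$ replaces the paper's appeal to an exercise in Fulton--Harris; both are sound.
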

\begin{rem}
It would be very interesting to investigate non-connected reductive subgroups $G\leq\GL_9$.
\end{rem}

We close the introduction by outlining the paper. In \S \ref{Schur}, we provide an upper bound on $m$ such that $\mathbb{S}_{\la}(\SL_m)$ is detected by the representation $\otimes^3$ in terms of the number of parts of $\la$. In \S \ref{Sym3}, we  discuss when $\Sym^3$ detects $\Sym^k(\SL_2)$. As an application, we present a partition theoretic interpretation of Theorem \ref{thm-A1-intro}. In \S \ref{tensor3}, we prove Theorem \ref{thm-A2-intro} and finally in \S \ref{GL9}, we prove Theorem \ref{thm-GL9-intro}.

%%%%%%%%%%%%%%%%%%%%%%%%%%%%%%%%%%%%%%%%%%%%%%%%%%%%%%%%%%%%%%%
%%%%%%%%%%%%%%%%%%%%%%%%%%%%%%%%%%%%%%%%%%%%%%%%

\section{Schur functors}\label{Schur}

Let $\lambda$ be a partition with at most $m$ parts written as
\begin{equation}\label{partition-mparts}
\lambda=(\lambda_1, \lambda_2, \dots , \lambda_m), \quad \lambda_1\geq\lambda_2\geq \cdots\geq \lambda_m\geq 0,
\end{equation}
and let $|\lambda|$ be the number partitioned by $\lambda$.

For any $m$ dimensional vector space $V$ over $\CC$ and any partition  $\lambda$ of $|\lambda|$ with at most $m$ parts as in \eqref{partition-mparts}, we can apply the Schur functor $\mathbb{S}_{\lambda}$ to $V$ to obtain a representation $\mathbb{S}_{\lambda}(V)$ of $\GL_m$. It remains irreducible when restrict to $\SL_m$. 

By the Littlewood-Richardson formula (compare with \cite[Exercise 15.23]{FH} ), one knows the decomposition of a tensor product of any two irreducible representations of $\mathrm{SL}_m$, namely
\begin{equation}\label{LR}
\mathbb{S}_{\lambda}(V) \otimes \mathbb{S}_{\mu}(V)=\bigoplus_{\nu}C_{\lambda\mu\nu}\mathbb{S}_{\nu}(V).
\end{equation}
Here the coefficient $C_{\lambda\mu\nu}$ are given by the Littlewood-Richardson rule and the sum is over partition $\nu$ of $|\lambda|+|\mu|$.

 Then one has the following result:

%%%%%%%%%%%%%%%%%
\begin{thm}\label{thm-Schur}
Let $\lambda$ be a partition with $\ell\leq m$ nonzero parts. If $m> 3\ell$, then the representation $\otimes^3$ does not detect  $\mathbb{S}_{\lambda}(\SL_m)$.
\end{thm}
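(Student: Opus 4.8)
The plan is to reinterpret detection as the occurrence of the trivial $\SL_m$-representation and then to control which Schur functors can appear in the tensor cube by a row-counting argument. Writing $W:=\mathbb{S}_{\lambda}(V)$ with $V=\CC^m$ and $n=\dim W$, the subgroup $G=\mathbb{S}_{\lambda}(\SL_m)\leq\GL_n$ acts on $(\CC^n)^{\otimes 3}=W^{\otimes 3}$ through the Schur functor followed by the third tensor power. A line fixed by $G$ is acted on by a character of $G$; pulling back to $\SL_m$ gives a character of $\SL_m$, which is trivial since $\SL_m$ is its own derived group and so has no nontrivial one-dimensional representations. Hence the fixed line is pointwise fixed, and $\otimes^3$ detects $G$ if and only if the trivial representation of $\SL_m$ occurs in $W^{\otimes 3}$, i.e. $(W^{\otimes 3})^{\SL_m}\neq 0$.

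Next I would decompose $W^{\otimes 3}$ as a representation of $\GL_m$ by applying the Littlewood--Richardson formula \eqref{LR} twice, obtaining
$$
W^{\otimes 3}=\mathbb{S}_{\lambda}(V)^{\otimes 3}=\bigoplus_{\nu} a_{\nu}\,\mathbb{S}_{\nu}(V),
$$
where each $\nu$ is a partition of $3|\lambda|$ with at most $m$ parts and $a_{\nu}\geq 0$. Since $\mathbb{S}_{\nu}(V)$ restricts to the irreducible $\SL_m$-representation of highest weight $(\nu_1-\nu_2,\dots,\nu_{m-1}-\nu_m)$, its restriction is trivial precisely when $\nu_1=\nu_2=\cdots=\nu_m$, that is, when $\nu=(d^m)$ is a rectangle of full height $m$. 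Because $|\nu|=3|\lambda|>0$ (as $\lambda\neq 0$), such a $\nu$ forces $d=3|\lambda|/m>0$, so it genuinely has exactly $m$ nonzero parts. Thus detection is equivalent to the appearance of some full-height rectangle $(d^m)$ in the decomposition above.

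The key step, which I expect to be the only substantive point, is to bound the number of parts of every $\nu$ occurring with $a_{\nu}\neq 0$. I would invoke the standard fact that $c^{\nu}_{\alpha\beta}\neq 0$ implies $\ell(\nu)\leq\ell(\alpha)+\ell(\beta)$, where $\ell(\cdot)$ denotes the number of nonzero parts: in any Littlewood--Richardson filling of the skew shape $\nu/\alpha$ with content $\beta$, the skew cells in the first column occupy rows $\ell(\alpha)+1,\dots,\ell(\nu)$ and carry strictly increasing entries taken from $\{1,\dots,\ell(\beta)\}$, so $\ell(\nu)-\ell(\alpha)\leq\ell(\beta)$. Applying this bound to each of the two successive products and using $\ell(\lambda)=\ell$, every $\nu$ appearing in $W^{\otimes 3}$ satisfies $\ell(\nu)\leq 3\ell$.

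Finally, since $m>3\ell$ by hypothesis, no occurring $\nu$ can be a full-height rectangle $(d^m)$ with $d>0$, as the latter has exactly $m>3\ell$ parts. Therefore the trivial $\SL_m$-representation does not occur in $W^{\otimes 3}$, and $\otimes^3$ does not detect $G$. The remaining verifications---that a $G$-fixed line is pointwise fixed, and that the full-height rectangles are exactly the $\GL_m$-irreducibles restricting trivially to $\SL_m$---are routine, so the argument reduces entirely to the row bound combined with the degree constraint $|\nu|=3|\lambda|>0$.
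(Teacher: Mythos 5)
Your proof is correct and rests on the same key fact as the paper's: the Littlewood--Richardson rule bounds the number of rows of any constituent of a tensor product by the sum of the rows of the factors. The only (cosmetic) difference is that you decompose the full triple product and look for full-height rectangles $(d^m)$, needing $m$ rows against your bound of $3\ell$, whereas the paper dualizes one factor, writes $W^{\otimes 3}\cong\mathrm{Hom}(W^{\vee},W\otimes W)$, and compares the at least $m-\ell$ rows of $\lambda^{\vee}+b$ against the at most $2\ell$ rows allowed in $W\otimes W$ --- both routes reduce to the same inequality $m>3\ell$, and your version has the small merit of making explicit the reduction from ``fixes a line'' to ``contains the trivial representation.''
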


\begin{proof}
Let $\lambda$ be a partition of $|\lambda|$ whose number of nonzero parts is $\ell\leq m$. In other words, $\lambda$ can be written as
$$
\lambda=(\lambda_1, \dots, \lambda_{\ell}, \underbrace{0, \dots, 0}_{m-\ell~ \text{times}}),\quad \lambda_1\geq \lambda_2\geq \dots \geq \lambda_{\ell}>0\,.
$$ 
Let $V$ be the standard representation of $\SL_m(\CC)$. Consider
\begin{align}
\mathbb{S}_{\lambda}(V)^{\otimes 3}&\cong \mathrm{Hom}(\mathbb{S}_{\lambda}(V)^{\vee}, \sch_{\la}(V)\otimes \sch_{\la}(V))\nonumber\\
&\cong\bigoplus_{\nu}C_{\lambda\la\nu}~ \mathrm{Hom}(\mathbb{S}_{\lambda}(V)^{\vee}, \sch_{\nu}(V)), \label{Schur-Hom}
\end{align}
where we employ the Littlewood-Richardson formula \eqref{LR} and denote by $\sch_{\la}(V)^{\vee}$ the dual space of $\sch_{\la}(V)$. Therefore the partitions $\nu$ indexing the tensor product of \eqref{Schur-Hom} are partitions of $2|\la|$ with at most $m$ parts.

Let $\lambda^{\vee}$ be the partition such that $\mathbb{S}_{\lambda}(V)^{\vee}=\mathbb{S}_{\lambda^{\vee}}(V).$ Then it is easy to check that $\la^{\vee}$ is
\begin{align} \label{l-shape}
\la^{\vee}=(\underbrace{\la_1, \dots, \la_1}_{m-\ell~ \text{times}}, \la_1-\la_{\ell}, \la_1-\la_{\ell-1}, \dots , \la_1-\la_2, 0)
\end{align}
(compare with \cite[Exercise 15.50]{FH}). Therefore $\la^{\vee}$ must have at least $m-\ell$ nonzero parts.

Notice that two partitions $\mu$ and $\mu'$ with at most $m$ parts determine the same representation of $\mathrm{SL}_m$ if and only if there is an integer $b$ such that
$$
\mu=\mu'+b,
$$
that is, $\mu_i=\mu'_i+b$ for all $1 \leq i \leq m$.  Thus it suffices to prove that if
$m>3\ell$, then
$$
\mathrm{Hom}_{\SL_m}(\mathbb{S}_{\lambda^{\vee}+b}(V), \sch_{\la}(V)\otimes \sch_{\la}(V))=0
$$
for all $b \in \ZZ$ such that $\lambda^{\vee}+b$ is a partition (i.e. all of its entries are nonnegative).  In fact, such a $b$ is necessarily nonnegative by \eqref{l-shape}. Thus partitions of the form $\lambda^{\vee}+b$ always have at least $m-\ell$ nonzero parts.

On the other hand, the Littlewood-Richardson rule forces the partition $\nu$ indexing the decomposition in \eqref{LR} to have at most $2\ell$ nonzero parts. This is a well-known fact, but we will provide a quick proof. To prove this, one first recalls that the number of boxes in the first column of the Young diagram of a given partition denotes the number of nonzero parts of it. Therefore $\la$ will have $\ell$ boxes in the first column in its Young diagram. Now we have to fill this Young diagram of $\la$ by the Littlewood-Richardson rule to obtain the Young diagram for $\nu$. Starting with $\ell$ boxes in the first column corresponding to $\la$, one is allowed to add only as many boxes in the first column as the number of parts of $\la$ which is in turn $\ell$. Therefore the maximum number of boxes in the first column of the Young diagram of $\nu$ is $2\ell$ (see \cite[Appendix A]{FH} for example).

Assuming $m-\ell>2\ell$, we conclude that the partition $\la^{\vee}+b$ can never appear in the decomposition of tensor product, hence we prove that
$$
\mathrm{Hom}_{\mathrm{SL}_m}(\mathbb{S}_{\lambda}(V)^{\vee}, \sch_{\nu}(V))=0.
$$This completes the proof.
\end{proof}

%%%%%%%%%%%%%%%%%%%%%%%%%%%%%%%%%%%%%%%%%%%%%%%%%%%%%%%%%%%%%%%%%%%%%%%%%
%%%%%%%%%%%%%%%%%%%%%%%%%%%%%%%%%%%%%%%%%%%%%%%%%%%%%%%%%%%%%%%%%%%%%%%%%

\section{The subgroup $\Sym^k(\SL_2)$}\label{Sym3}

In this section, we only consider a special types of irreducible reductive subgroups of $\GL_n$, namely $G:=\Sym^k(\SL_2)$. In fact, we provide a precise condition on $k$ for $G$ to be detected by the representation $\Sym^3$, which implies that it is detected by $\otimes^3$.

Recall the Gaussian polynomial
\begin{equation}\label{Gauss}
\left[\begin{array}{c}a\\ b\end{array}\right]_q=\frac{(1-q^a)(1-q^{a-1})\cdots (1-q^{a-b+1})}{(1-q)(1-q^2)\cdots(1-q^b)}
\end{equation}
and the plethysm decomposition for $\GL_2$, namely:

\begin{thm}[Theorem 5.5 \cite{Do}]\label{Pleth}
Let $\dim V=2$. Then there is an isomorphism of $\GL_2$-representations
\begin{equation}\label{Pleth-eqn}
\Sym^j(\Sym^kV)\cong \bigoplus_{w=0}^{\lfloor jk/2\rfloor} (\Sym^{jk-2w}V\otimes {\det}^{jk-w})^{\oplus N(j, k, w)}, 
\end{equation}
where $N(j, k, w)$ is the coefficient of $q^w$ in the polynomial $(1-q)\left[\begin{array}{c}j+k\\ k\end{array}\right]_q.$
\end{thm}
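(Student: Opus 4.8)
The plan is to verify the claimed decomposition at the level of characters, using that a finite-dimensional representation of $\GL_2$ is determined up to isomorphism by its character, and that the irreducible polynomial characters are exactly $s_{(a,b)}(x,y)=(xy)^{b}\cdot\mathrm{ch}\,\Sym^{a-b}V$ for $a\geq b\geq 0$, where $x,y$ denote the eigenvalues of a diagonal element acting on $V$. The main reduction is the observation that the character of any symmetric power $\Sym^{j}W$ is the complete homogeneous symmetric function $h_{j}$ evaluated at the weights of $W$. Applying this to $W=\Sym^{k}V$, whose weights are the $k+1$ monomials $x^{k-i}y^{i}$ for $0\leq i\leq k$, turns Theorem~\ref{Pleth} into an identity of symmetric polynomials in $x,y$, and this is what I would establish.

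First I would package the characters into a generating series in an auxiliary variable $t$, using the standard product formula for $h_{j}$:
$$
\sum_{j\geq 0}\mathrm{ch}\,\Sym^{j}(\Sym^{k}V)\;t^{j}=\prod_{i=0}^{k}\frac{1}{1-x^{k-i}y^{i}\,t}.
$$
Factoring $x^{k-i}y^{i}=x^{k}(y/x)^{i}$ and substituting $q=y/x$, $s=x^{k}t$, the product becomes $\prod_{i=0}^{k}(1-sq^{i})^{-1}$, which by the $q$-binomial theorem equals $\sum_{j\geq 0}\left[\begin{array}{c}j+k\\k\end{array}\right]_{q}s^{j}$. Extracting the coefficient of $t^{j}$ then yields the closed form
$$
\mathrm{ch}\,\Sym^{j}(\Sym^{k}V)=x^{jk}\left[\begin{array}{c}j+k\\k\end{array}\right]_{y/x}.
$$

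Writing the Gaussian polynomial as $\left[\begin{array}{c}j+k\\k\end{array}\right]_{q}=\sum_{w}a_{w}q^{w}$, this reads $\sum_{w}a_{w}x^{jk-w}y^{w}$, a polynomial of degree $jk$ that is palindromic (hence symmetric), as it must be. I would expand it in the basis of irreducible characters by noting that $(xy)^{w}\,\mathrm{ch}\,\Sym^{jk-2w}V=\sum_{p=w}^{jk-w}x^{p}y^{jk-p}$ is simply the sum of the monomials in an interval. Matching coefficients of $x^{p}y^{jk-p}$ (say for $p\leq jk/2$) then telescopes, so that the multiplicity of the constituent with $\Sym^{jk-2w}V$ comes out to $a_{w}-a_{w-1}$, which is exactly the coefficient of $q^{w}$ in $(1-q)\left[\begin{array}{c}j+k\\k\end{array}\right]_{q}$, namely $N(j,k,w)$. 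The range $0\leq w\leq\lfloor jk/2\rfloor$ is forced by $jk-2w\geq 0$, and the twist by the appropriate power of $\det$ is pinned down by the central character, equivalently by homogeneity of degree $jk$.

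The step I expect to demand the most care is the identification of the product with the Gaussian polynomial: the bookkeeping in the $q$-binomial theorem together with the substitution $q=y/x$ must be carried out so that the principal specialization lands exactly on $\left[\begin{array}{c}j+k\\k\end{array}\right]_{y/x}$. A second point worth flagging is that nonnegativity of the multiplicities, $N(j,k,w)=a_{w}-a_{w-1}\geq 0$ for $w\leq jk/2$, is not visible from the character computation alone; it follows from the unimodality of the coefficients of Gaussian polynomials (a theorem of Sylvester), and confirms that the formal character identity genuinely describes a direct sum decomposition. Finally, since characters determine $\GL_{2}$-representations up to isomorphism, the matching of characters upgrades to the asserted isomorphism.
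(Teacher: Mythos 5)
The paper offers no proof of this statement: it is imported wholesale as Theorem 5.5 of Dolgachev \cite{Do}, so there is no internal argument to compare against. Your character-theoretic derivation is the classical Cayley--Sylvester argument, and its core is correct: the generating function $\sum_{j\geq 0}\mathrm{ch}\,\Sym^j(\Sym^kV)\,t^j=\prod_{i=0}^{k}(1-x^{k-i}y^i t)^{-1}$, the substitution $q=y/x$, $s=x^k t$, and the $q$-binomial theorem do yield $\mathrm{ch}\,\Sym^j(\Sym^kV)=x^{jk}\left[\begin{smallmatrix}j+k\\ k\end{smallmatrix}\right]_{y/x}$, and telescoping against the interval sums $(xy)^w\,\mathrm{ch}\,\Sym^{jk-2w}V=\sum_{p=w}^{jk-w}x^p y^{jk-p}$ correctly identifies the multiplicity of the $w$-th constituent as $a_w-a_{w-1}=N(j,k,w)$, where $a_w=p(k,j,w)$.

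Two caveats. First, your own homogeneity argument pins the determinant twist as $\det^{w}$, not $\det^{jk-w}$: the character of $\Sym^{jk-2w}V\otimes\det^{c}$ is homogeneous of degree $jk-2w+2c$, which equals $jk$ precisely when $c=w$. So what you prove is the version of \eqref{Pleth-eqn} with $\det^{w}$; the exponent $jk-w$ in the statement as transcribed is either a typo or a dual convention in \cite{Do}, and you should not claim the displayed formula literally. This is harmless downstream, since the paper only ever uses the restriction to $\SL_2$ in \eqref{free}, where every power of $\det$ is trivial. Second, you do not need Sylvester's unimodality theorem as an input to get $N(j,k,w)\geq 0$: complete reducibility of $\GL_2$-representations in characteristic zero guarantees a priori that $\Sym^j(\Sym^kV)$ decomposes into irreducibles with nonnegative multiplicities, and your character computation then identifies those multiplicities with $a_w-a_{w-1}$; the logical flow is that this argument \emph{proves} unimodality of the Gaussian coefficients rather than relying on it.
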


In particular if $j=2$ in Theorem \ref{Pleth} and $V$ is a vector space of characteristic zero such that $\dim V=2$, then one has
\begin{equation}\label{free}
\Sym^2(\Sym^kV)\cong \bigoplus_{w\geq 0}^{\lfloor k/2\rfloor}\Sym^{2k-4w}V
\end{equation}
as $\SL_2$-representations (see \cite[Exercise 5.16]{Do}, for instance). One should notice that some terms in \eqref{Pleth-eqn} vanishes in the decomposition in \eqref{free}. In the following theorem, we give a complete answer to the question of when the representation $\Sym^3$ detects the subgroup $\Sym^{n-1}(\SL_2)$ of $\GL_n$:

%%%%%%%%%%%%%%%%%%%%%%%%
\begin{thm}\label{A1-thm}
Let 
$$G:=\Sym^{n-1}(\SL_2)\hookrightarrow \GL_n.$$ 
The representation $\Sym^3$ detects $G$
if and only if $n\equiv 1\pmod{4}$.
\end{thm}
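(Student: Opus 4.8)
The plan is to read \emph{detection} representation-theoretically: $\Sym^3$ detects $G=\Sym^{n-1}(\SL_2)$ exactly when $G$ fixes a line in $\Sym^3(V_n)$, where $V_n=\Sym^{n-1}V$ is the standard representation of $\GL_n$ restricted to $G$; equivalently, when the trivial $\SL_2$-representation $\Sym^0 V$ occurs in the plethysm $\Sym^3(\Sym^{n-1}V)$. Writing $k=n-1$ and applying Theorem \ref{Pleth} with $j=3$ (the determinant twists are trivial on $\SL_2$), I obtain the $\SL_2$-isomorphism
$$\Sym^3(\Sym^k V)\cong\bigoplus_{w=0}^{\lfloor 3k/2\rfloor}\bigl(\Sym^{3k-2w}V\bigr)^{\oplus N(3,k,w)},\qquad N(3,k,w)=[q^w]\,(1-q)\left[\begin{array}{c}k+3\\3\end{array}\right]_q.$$

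The trivial summand occurs iff $3k-2w=0$ for some $w$ with $N(3,k,w)>0$. Since $3k-2w=0$ forces $w=3k/2$, this already requires $k$ even, i.e. $n$ odd; for $n$ even no invariant line can exist, so $G$ is not detected, consistent with the theorem. For $n$ odd I would therefore reduce the statement to the single claim that the middle multiplicity
$$N\!\left(3,k,\tfrac{3k}{2}\right)=[q^{3k/2}]\,\frac{(1-q^{k+1})(1-q^{k+2})(1-q^{k+3})}{(1-q^2)(1-q^3)}$$
is positive precisely when $k\equiv 0\pmod 4$ and zero when $k\equiv 2\pmod 4$. Identifying $\left[\begin{array}{c}k+3\\3\end{array}\right]_q=\sum_j p(k,3,j)\,q^j$ shows $N(3,k,3k/2)=p(k,3,3k/2)-p(k,3,3k/2-1)$, which is exactly how Corollary \ref{cor-partition-intro} will drop out.

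To evaluate this coefficient I would set $k=2m$ and let $h_r=[q^r]\,\frac{1}{(1-q^2)(1-q^3)}$ count the representations $r=2x+3y$ with $x,y\ge 0$. Expanding the numerator and discarding all terms of degree exceeding $3m$, the target coefficient collapses to $h_{3m}-h_{m-1}-h_{m-2}-h_{m-3}$ (with $h_r=0$ for $r<0$). The decisive elementary input is the quasi-periodicity $h_{r+6}=h_r+1$ for $r\ge 0$, which makes this combination invariant under $m\mapsto m+6$; it then suffices to compute the six residues $m\equiv 0,\dots,5\pmod 6$, where the value comes out $1$ for $m$ even and $0$ for $m$ odd. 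Since $k=2m\equiv 0\pmod 4$ iff $m$ is even, this is exactly the asserted dichotomy, and translating it to $p(4\ell-2,3,\cdot)$ and $p(4\ell,3,\cdot)$ produces the two identities of Corollary \ref{cor-partition-intro}.

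The main obstacle is precisely the \emph{exact} evaluation of $N(3,k,3k/2)$: unimodality of Gaussian polynomials only yields $N(3,k,3k/2)\ge 0$, so it guarantees neither the vanishing when $k\equiv 2\pmod 4$ nor the precise value $1$ when $k\equiv 0\pmod 4$. The reduction to partitions into parts $2$ and $3$, together with their quasi-periodicity, is what upgrades a mere nonnegativity to the sharp answer, after which the six-residue check is routine. As a cross-check I would also compute $\dim\Sym^3(\Sym^k V)^{\SL_2}$ directly as $m_0-m_2$, the difference of the dimensions of the weight-$0$ and weight-$2$ spaces, obtained by counting monomials $e_ae_be_c$ in the weight vectors of $\Sym^k V$; this again equals $p(k,3,3k/2)-p(k,3,3k/2-1)$ and confirms both the multiplicity and the link to the corollary independently of Theorem \ref{Pleth}.
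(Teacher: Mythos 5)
Your argument is correct, and it takes a genuinely different route from the paper. The paper never touches the $j=3$ plethysm: it embeds $\Sym^3W\hookrightarrow \mathrm{Hom}(W^{\vee},\Sym^2W)$ for $W=\Sym^kV$, decomposes $\Sym^2(\Sym^kV)\cong\bigoplus_i\Sym^{2k-4i}V$, and invokes Schur's lemma, so the congruence $k\equiv 0\pmod 4$ falls out of the single equation $k=2k-4i$ with essentially no computation. You instead apply Theorem \ref{Pleth} with $j=3$ and evaluate the middle coefficient $N(3,k,3k/2)$ of $(1-q)\left[\begin{smallmatrix}k+3\\3\end{smallmatrix}\right]_q$ by hand, reducing it to $h_{3m}-h_{m-1}-h_{m-2}-h_{m-3}$ for the partition counts $h_r$ into parts $2$ and $3$ and using the quasi-periodicity $h_{r+6}=h_r+1$; I checked the reduction and the six residues, and they come out as you claim. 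Each approach buys something. The paper's is shorter, but note that $\mathrm{Hom}(W^{\vee},\Sym^2W)\cong \Sym^3W\oplus\sch_{(2,1)}(W)$ is strictly larger than $\Sym^3W$, so as written it cleanly gives only the ``only if'' direction; locating the invariant line inside $\Sym^3W$ rather than in $\sch_{(2,1)}(W)$ needs an extra word (e.g.\ the parallel computation with $\mbox{\Large $\wedge$}^2W$). Your computation works directly inside $\Sym^3(\Sym^kV)$, so it proves both directions at once, and it delivers the exact value of $N(3,k,3k/2)$ --- which is precisely the quantity $p(k,3,3k/2)-p(k,3,3k/2-1)$ appearing in Corollary \ref{cor-partition-intro}. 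The paper derives that corollary by feeding the theorem back into the plethysm formula, whereas your argument proves the theorem and the corollary simultaneously and gives the partition identities a direct combinatorial proof (of the kind the paper says would be interesting to find), at the cost of the routine residue check.
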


\begin{proof}
Let $W$ be a representation of $\SL_2$. Then it is well-known that one has an isomorphism of $\SL_2$ representations
$$
W^{\otimes 3}\cong \mathrm{Hom}( W^{\vee}, W\otimes W)=\mathrm{Hom}(W^{\vee}, \Sym^2W)\oplus\mathrm{Hom}( W^{\vee}, \mbox{\Large $\wedge$}^2W),
$$
where $W^{\vee}$ denotes the dual space of $W$. Considering the highest weights, it is easy to see that
$$
\Sym^3W\hookrightarrow \mathrm{Hom}( W^{\vee}, \Sym^2W)
$$
as $\SL_2$-representations.
Let $W=\Sym^kV$, where $V$ is a complex vector space with $\dim V=2$.  Then
\begin{align*}
\mathrm{Hom}( W^{\vee}, \Sym^2W)&=\mathrm{Hom}((\Sym^kV)^{\vee},\Sym^2(\Sym^kV))\\
&=\mathrm{Hom}(\Sym^k(V^{\vee}), \Sym^2(\Sym^kV))\\
&=\mathrm{Hom}(\Sym^k V, \Sym^2(\Sym^kV))\\
&\cong \bigoplus_{i=0}^{\lfloor k/2\rfloor}\mathrm{Hom}( \Sym^kV,\Sym^{2k-4i}V),
\end{align*}
where in the last equality we use the Plethysm decomposition formula \eqref{free} and the fact that $V^{\vee}\cong V$ for $\dim V=2$. By Shur's lemma, 
\begin{equation}\label{hom}
\mathrm{Hom}_{\SL_2}(\Sym^kV, \Sym^{2k-4i}V)\leq 1,
\end{equation} and 
 $\mathrm{Hom}_{\SL_2}(\Sym^kV, \Sym^{2k-4i}V)=1$ if and only if $k=2k-4i$. Therefore $k\equiv 0 \pmod{4}$. By taking $k=n-1$, we obtain the result.
\end{proof}

One then obtains immediate partition theoretic interpretation of Theorem \ref{A1-thm}. It is well-known that the generating function of $p(k, j, n)$ is the Gaussian polynomial
\begin{equation}\label{partition-eqn}
\sum_{n\geq 0}p(k, j, n)q^n=\left[\begin{array}{c}j+k\\ k\end{array}\right]_q
\end{equation}(see \cite[Proposition 1.7.3]{Stan}, for instance).  We deduce the following corollary:

%%%%%%%%
\begin{cor}
For any integer $\ell\geq 1$, one has 
$$p(4\ell, 3, 6\ell)-p(4\ell, 3, 6\ell-1)=1$$and
$$p(4\ell-2, 3, 6\ell-3)=p(4\ell-2, 3, 6\ell-4).$$
\end{cor}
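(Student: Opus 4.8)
The plan is to read each identity off the multiplicity of the trivial $\SL_2$-representation in the plethysm $\Sym^3(\Sym^k V)$, where $V$ is a complex vector space with $\dim V=2$, and to evaluate that multiplicity using Theorem \ref{A1-thm} together with the generating function \eqref{partition-eqn}.

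First I would apply Theorem \ref{Pleth} with $j=3$. Since $\det$ acts trivially on $\SL_2$, this yields an isomorphism of $\SL_2$-representations
$$
\Sym^3(\Sym^k V)\cong\bigoplus_{w=0}^{\lfloor 3k/2\rfloor}(\Sym^{3k-2w}V)^{\oplus N(3,k,w)},
$$
where $N(3,k,w)$ is the coefficient of $q^w$ in $(1-q)\left[\begin{array}{c}k+3\\ k\end{array}\right]_q$. Distinct values of $w$ give non-isomorphic summands, and the trivial representation $\Sym^0 V$ appears only for the index $w$ with $3k-2w=0$; this forces $k$ even and $w=3k/2$, so the multiplicity of the trivial representation in $\Sym^3(\Sym^k V)$ equals exactly $N(3,k,3k/2)$. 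Feeding in the generating function \eqref{partition-eqn} with $j=3$, namely $\left[\begin{array}{c}k+3\\ k\end{array}\right]_q=\sum_{n\geq 0}p(k,3,n)q^n$, I obtain, for every even $k$,
$$
N(3,k,3k/2)=p(k,3,3k/2)-p(k,3,3k/2-1).
$$
Hence both asserted identities amount to evaluating this one multiplicity, at $k=4\ell$ and at $k=4\ell-2$ respectively.

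Next I would determine the multiplicity from Theorem \ref{A1-thm}. By definition, $\Sym^3$ detects $G=\Sym^k(\SL_2)$ exactly when the trivial representation occurs in $\Sym^3(\Sym^k V)$, i.e. when $N(3,k,3k/2)>0$; by Theorem \ref{A1-thm} (applied with $n=k+1$) this occurs if and only if $k\equiv 0\pmod 4$. To obtain the precise value rather than mere positivity, I would reuse the embedding $\Sym^3(\Sym^kV)\hookrightarrow \mathrm{Hom}\bigl((\Sym^kV)^{\vee},\Sym^2(\Sym^kV)\bigr)$ from the proof of Theorem \ref{A1-thm}: by Schur's lemma and the $\Sym^2$-plethysm \eqref{free}, the space on the right contains the trivial representation with multiplicity at most one, so $N(3,k,3k/2)\leq 1$. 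Combining the two inputs, $N(3,k,3k/2)=1$ when $k\equiv 0\pmod 4$ and $N(3,k,3k/2)=0$ when $k\equiv 2\pmod 4$.

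Finally I would specialize. Taking $k=4\ell$ gives $3k/2=6\ell$ and $N(3,4\ell,6\ell)=1$, which is the first displayed identity $p(4\ell,3,6\ell)-p(4\ell,3,6\ell-1)=1$; taking $k=4\ell-2$ gives $3k/2=6\ell-3$ and $N(3,4\ell-2,6\ell-3)=0$, which is the second identity $p(4\ell-2,3,6\ell-3)=p(4\ell-2,3,6\ell-4)$. I expect the only genuinely delicate point to be the sharp count: positivity is exactly Theorem \ref{A1-thm}, but the value $1$ (not just ``$>0$'') hinges on the Schur's-lemma upper bound supplied by the $\Sym^2$-plethysm embedding. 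Equivalently, the combinatorial assertion that the central coefficient of $\left[\begin{array}{c}k+3\\ k\end{array}\right]_q$ beats its predecessor by exactly $1$ when $4\mid k$ and ties it when $k\equiv 2\pmod 4$ is precisely what the representation theory furnishes, and what any direct combinatorial proof would need to recover.
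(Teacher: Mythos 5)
Your proposal is correct and follows essentially the same route as the paper: both extract the trivial-representation multiplicity $N(3,k,3k/2)$ from the $j=3$ plethysm \eqref{Pleth-eqn}, pin its value down to $0$ or $1$ via the fact \eqref{hom} together with Theorem \ref{A1-thm}, and then translate through the generating function \eqref{partition-eqn}. If anything, your write-up is slightly more careful than the paper's in separating the lower bound (positivity, from the statement of Theorem \ref{A1-thm}) from the upper bound (the Schur's-lemma count in the $\Sym^2$-plethysm), which the paper compresses into a single appeal to \eqref{hom}.
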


\begin{proof}
Note that when $3k-2w=0$ in the decomposition \eqref{Pleth-eqn}, one has only one $1$-dimensional summand. We then use the fact \eqref{hom} in the proof of Theorem \ref{A1-thm} to deduce that
$$
N(3, k, 3k/2)=\begin{cases} 1 &\mbox{if } k \equiv 0 \pmod{4},\\
0& \mbox{if } k\equiv 2 \pmod{4}.
\end{cases}
$$
On the other hand, by Theorem \ref{Pleth}, since $N(3, k, 3k/2)$ denotes the coefficient of $q^{3k/2}$ in the polynomial $(1-q)\left[\begin{array}{c}3+k\\ k\end{array}\right]_q$,  $N(3, k, 3k/2)$ is equal to the coefficient of $q^{3k/2}$ in $\left[\begin{array}{c}3+k\\ k\end{array}\right]_q$ minus the coefficient of $q^{3k/2-1}$ in $\left[\begin{array}{c}3+k\\ k\end{array}\right]_q$. Using the partition interpretation of the Gausssian polynomial as in \eqref{partition-eqn}, we derive that
$$
p(k, 3, 3k/2)-p(k, 3, 3k/2-1)=\begin{cases} 1 &\mbox{if } k \equiv 0 \pmod{4}\\
0& \mbox{if } k\equiv 2 \pmod{4},
\end{cases}
$$which in turn completes the proof.
\end{proof}

%%%%%%%%%%%%%%%%%%%%%%%%%%%%%%%%%%%%%%%%%%%%%%%%%%%%%%%%%%%%%%%%%%%%%%%%%%%%%%%
%%%%%%%%%%%%%%%%%%%%%%%%%%%%%%%%%%%%
\section{The subgroup $\Sym^k(\SL_3)$}\label{tensor3}

In this section, we closely study the reductive subgroups $\Sym^k(\SL_3)$ for  $k>0$.

%%%%%%%%%%%%%%%%%%%%%%%%%%%%%%%%%%%%%%%%%%%
\begin{thm}\label{An-thm}
Let $n=\frac{(k+2)(k+1)}{2}$ and let
$$G:=\Sym^k(\SL_3)\hookrightarrow \GL_n$$ 
for $k>0$. Then  the representation $\otimes^3$ detects $G$.
\end{thm}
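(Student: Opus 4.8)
The plan is to reduce the detection statement to a single tensor-product multiplicity for $\SL_3$ and then settle it by the Pieri rule. Write $W := \Sym^k(\CC^3)$ for the defining representation of $G$ on $\CC^n$, so that $\otimes^3$ detects $G$ precisely when $(W^{\otimes 3})^{\SL_3}\neq 0$. Exactly as in the proof of Theorem \ref{A1-thm}, I would use the isomorphism of $\SL_3$-representations $W^{\otimes 3}\cong \mathrm{Hom}(W^{\vee}, W\otimes W)$, which reduces the problem to showing that $W^{\vee}$ occurs as a constituent of $W\otimes W$, i.e. $\mathrm{Hom}_{\SL_3}(W^{\vee}, W\otimes W)\neq 0$.

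The next step is to identify $W^{\vee}$ by its highest weight. Since $W=\sch_{(k)}(\CC^3)$ has highest weight $k\omega_1$, its dual has highest weight $-w_0(k\omega_1)=k\omega_2$, where $w_0$ is the longest element of the Weyl group (which interchanges $\omega_1$ and $\omega_2$ for $\SL_3$). The irreducible $\SL_3$-representation of highest weight $k\omega_2$ is the one indexed by the partition $(k,k,0)$ (compare \cite[Exercise 15.50]{FH} and \eqref{l-shape}). It therefore suffices to show that $(k,k,0)$ indexes a summand in the decomposition \eqref{LR} of $\sch_{(k)}(\CC^3)\otimes\sch_{(k)}(\CC^3)$.

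Because one of the factors is the single-row partition $(k)$, the Littlewood-Richardson formula \eqref{LR} specializes to the Pieri rule: the summands $\sch_{\nu}$ of $\sch_{(k)}(\CC^3)\otimes\sch_{(k)}(\CC^3)$ are exactly those for which $\nu$ is obtained from $(k)$ by adding $k$ boxes, no two in the same column. Taking $\nu=(k,k,0)$ and adding the $k$ new boxes along the second row, one in each of the columns $1,\dots,k$, is an admissible addition, so $\sch_{(k,k,0)}$ appears, with multiplicity one. Restricted to $\SL_3$ this summand is $W^{\vee}$, giving $\mathrm{Hom}_{\SL_3}(W^{\vee}, W\otimes W)\neq 0$ and hence the claim, uniformly in $k>0$.

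There is essentially no hard analytic content here; the only point requiring care is the bookkeeping for the dual representation, namely correctly passing from $W=\sch_{(k)}$ to the constituent $W^{\vee}$ of highest weight $k\omega_2=(k,k,0)$ via the $\omega_1\leftrightarrow\omega_2$ symmetry of $-w_0$, together with checking that this target partition is genuinely reachable by a single Pieri step. It is worth contrasting this with the $\SL_2$ situation of Theorem \ref{A1-thm}: there detection was governed by a congruence on $k$, whereas for $\SL_3$ the required dual constituent $(k,k,0)$ lies in $(k)\otimes(k)$ for every $k$, so $\otimes^3$ always detects $\Sym^k(\SL_3)$.
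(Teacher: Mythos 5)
Your proof is correct and follows essentially the same route as the paper: both reduce detection to showing $\mathrm{Hom}_{\SL_3}(W^{\vee}, W\otimes W)\neq 0$, identify $W^{\vee}$ with the partition $(k,k,0)$, and exhibit it in $\Sym^kV\otimes\Sym^kV$ via the Pieri rule (the paper writes out the full decomposition $\bigoplus_{i=0}^k\sch_{(k+i,k-i)}(V)$ and takes $i=0$). Your added bookkeeping with $-w_0$ and the explicit Pieri-step check is a slightly more detailed rendering of the same argument.
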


\begin{proof}
Let $V$ be the standard representation of $\SL_3$. For $k>0$, we consider
\begin{equation}\label{nontivial-Hom}
(\Sym^kV)^{\otimes 3}\cong \mathrm{Hom}((\Sym^kV)^{\vee}, \Sym^kV\otimes \Sym^kV),
\end{equation}
where $(\Sym^kV)^{\vee}$ denotes the dual space of $\Sym^kV$. We wish to investigate to see if $$
\mathrm{Hom}((\Sym^kV)^{\vee}, \Sym^kV\otimes \Sym^kV)\neq 0.
$$ 

As a special case of Pieri's formula (compare with \cite[Exercise 15.33]{FH} ), one has that
\begin{equation}\label{S}
\Sym^kV \otimes \Sym^k V=\bigoplus_{i=0}^k\mathbb{S}_{(k+i, k-i)}(V).
\end{equation}
Let $\lambda$ (resp. $\lambda^{\vee}$) be the partition corresponding to $\Sym^kV$ (resp. $(\Sym^kV)^{\vee}$). It is easy to check that if $m=3$, we have $\lambda^{\vee}=(k,k,0)$ (see \cite[Excercise 15.50]{FH} for example). In particular, $\lambda^{\vee}$ appears in the decomposition of \eqref{S} if one takes $i=0$.
\end{proof}

%%%%%%%%%%%%%%%%%%%%%%%%%%%%%%%%%%%%%%%%%%%%%%%%%%%%%%%%%%%%%%%%%
%%%%%%%%%%%%%%%%%%%%%%%%%%%%%%%%%%%%%%%%%%%%%%%%
\section{Connected reductive subgroups of $\GL_9$ detected by $\Sym^3$}\label{GL9}

Throughout this section, we assume that $G\leq \GL_9$ is an irreducible connected reductive subgroup. We characterize those $G$ detected by $\Sym^3$:

%%%%%%%%%%
\begin{thm}\label{thm-GL9}
Let $G\leq \GL_9$ be an irreducible connected reductive subgroup.
If the representation $\Sym^3$ detects $G$, then either $\mathrm{Lie}(G^\mathrm{der})=\mathfrak{sl}_2$, or a $\GL_9$-conjugate of $G$ is contained in $RS(\GL_3\times\GL_3)$.
\end{thm}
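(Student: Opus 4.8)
The plan is to reduce the statement to a finite classification problem by passing to the derived group $G^{\mathrm{der}}$, which is connected semisimple, and enumerating the possible complex semisimple subgroups of $\SL_9$ acting irreducibly on $\CC^9$. Since $G$ is irreducible and connected reductive, its image is determined up to conjugacy by $G^{\mathrm{der}}$ together with the central torus, and by the Remark following the first definition, $\Sym^3$ detects $G$ if and only if it detects $G^{\mathrm{der}}$. So I would first list all connected simple (and then semisimple) groups $H$ admitting a $9$-dimensional irreducible representation, organized by the Lie type of $H$ and the highest weight $\lambda$ of the embedding $H \hookrightarrow \SL_9$. For each such $(H,\lambda)$, the task is to decide whether the $H$-invariants $(\Sym^3 \CC^9)^{H}$ are nonzero.

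\textbf{Key steps.} First I would dispense with the simple groups of rank one, namely $\mathfrak{sl}_2$, since these are explicitly allowed in the conclusion and need not be analyzed further. Next, for each remaining candidate $(H,\lambda)$ I would compute the decomposition of $\Sym^3$ of the $9$-dimensional module into irreducible $H$-representations and check for the presence of the trivial summand. The main cases to treat are: $\SL_3$ acting via $\Sym^2$ (adjoint-type, $\dim = 9$ does not match, so really via suitable small representations), the groups $\SL_3 \times \SL_3$ acting by the external tensor product $V \boxtimes V$ of the two standard $3$-dimensional representations (which is exactly $RS(\GL_3\times\GL_3)$), and the classical groups $\mathrm{SO}_9$, $\mathrm{Sp}$ in the relevant small ranks whose standard or spin representations have dimension $9$, as well as any exceptional coincidences. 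For the tensor-product case I expect $\Sym^3(V\boxtimes V)$ to contain an invariant line, which is precisely the statement that $RS(\GL_3\times\GL_3)$ is detected by $\Sym^3$ (compatible with the $m=3$ instance of the $\Sym^m$ detection result cited from \cite{GK}); this is the branch landing in the second alternative of the conclusion. For the remaining candidates I would verify that no trivial summand occurs, thereby excluding them from being detected, so that the only surviving possibilities are $\mathfrak{sl}_2$ or containment in $RS(\GL_3\times\GL_3)$.

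\textbf{Carrying out the invariant computation.} Concretely, for $H$ acting irreducibly on $W=\CC^9$ with highest weight $\lambda$, I would use $\Sym^3 W \hookrightarrow W^{\otimes 3}$ and the adjoint isomorphism $W^{\otimes 3} \cong \mathrm{Hom}(W^\vee, W\otimes W)$ as in the proofs of Theorem \ref{An-thm} and Theorem \ref{A1-thm}; detecting $H$ amounts to finding the self-dual partition $\lambda^\vee$ among the constituents of $W\otimes W$ inside the symmetric (rather than alternating) part. For each case this is a finite Littlewood–Richardson or weight-multiplicity computation, which I can carry out using the branching rules and the plethysm data already invoked in the paper. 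The self-duality constraint ($W\cong W^\vee$) is forced whenever an invariant line can exist, so I would first use it to prune the list, keeping only the orthogonal (self-dual of symmetric type) representations of dimension $9$.

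\textbf{Main obstacle.} The hard part will be ensuring the enumeration of irreducible $9$-dimensional representations of semisimple groups is genuinely complete, including the semisimple (non-simple) groups and the low-rank exceptional isomorphisms, and then ruling out every candidate other than $\mathfrak{sl}_2$ and the tensor-product embedding. In particular, one must carefully treat $\SL_3\times\SL_3$ versus its possible proper subgroups and the orthogonal group $\mathrm{SO}_9$, making sure that the only irreducible $9$-dimensional orthogonal representation whose symmetric cube contains an invariant is the one factoring through $RS(\GL_3\times\GL_3)$, and that no spurious invariant arises from, say, the adjoint-type or spin-type $9$-dimensional modules. Controlling this case analysis, rather than any single computation, is where the real work lies.
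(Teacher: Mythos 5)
Your overall strategy is the same as the paper's: reduce to $G^{\mathrm{der}}$, enumerate the connected semisimple groups with a $9$-dimensional irreducible representation, and test each for a $\Sym^3$-invariant. The paper gets the enumeration from a dimension table (the possibilities are $\mathfrak{sl}_2$, $\mathfrak{sl}_3\times\mathfrak{sl}_3$, $\mathfrak{so}_3\times\mathfrak{so}_3$, $\mathfrak{so}_9$), disposes of the two product cases without any invariant computation --- they are visibly contained in $RS(\GL_3\times\GL_3)$, which is all the conclusion requires --- and only does an actual decomposition for $\mathfrak{so}_9$, where $\Sym^3$ of the standard representation has no trivial summand. Your plan to verify detection for the tensor-product cases is harmless but unnecessary; the one place you must compute is exactly where the paper computes.

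There is, however, a step in your proposal that would fail. You assert that ``the self-duality constraint ($W\cong W^\vee$) is forced whenever an invariant line can exist'' and propose to prune the list down to orthogonal $9$-dimensional representations. This is false: a trivial summand in $\Sym^3 W$ only forces $W^\vee$ to occur in $W\otimes W$, not $W\cong W^\vee$. The standard counterexample is the case you most need to keep: $W=V\boxtimes V$ for $\SL_3\times\SL_3$ is not self-dual (its dual is $V^\vee\boxtimes V^\vee$), yet $\Sym^3(V\boxtimes V)$ contains the invariant line spanned by the determinant on $3\times 3$ matrices --- which is precisely why $RS(\GL_3\times\GL_3)$ is detected. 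So your pruning step contradicts your own (correct) expectation two sentences earlier and, if applied, would discard the representation that must survive into the second branch of the conclusion. Drop the self-duality filter and instead test each candidate directly for $W^\vee\hookrightarrow \Sym^2 W$ (or compute $\Sym^3 W$ outright), as the paper does for $\mathfrak{so}_9$; with that correction, and with care that the enumeration of $9$-dimensional irreducibles of semisimple (not just simple) Lie algebras is complete, your argument goes through.
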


\begin{proof}
Let
$$
\rho : G\hookrightarrow \GL_9
$$
be the natural representation. Recall that $G=Z_GG^\mathrm{der}$, where $Z_G$ is the center of $G$. Thus it suffices to consider the irreducible representation of $G^{\mathrm{der}}$  with dimension $9$.

From the dimension table of irreducible representations in \cite{FK}, one knows that $\mathrm{Lie}(G^{\mathrm{der}})$ is either $\mathfrak{sl}_2$, $\mathfrak{sl}_3\times \mathfrak{sl}_3$, $\mathfrak{so}_3\times \mathfrak{so}_3$ or $\mathfrak{so}_9$. 

The case $\mathrm{Lie}(G^{\mathrm{der}})=\mathfrak{sl}_2$ coincides with Theorem \ref{A1-thm} taking $n=9$. In the case where $\mathrm{Lie}(G^{\mathrm{der}})$ is isomorphic to $\mathfrak{sl}_3\times \mathfrak{sl}_3$ or $\mathfrak{so}_3\times \mathfrak{so}_3$, it  is clear that $\mathrm{Lie}(G^{\mathrm{der}})$ is contained in a $\GL_9$-conjugate of $\mathrm{Lie}(RS(\GL_3 \times \GL_3))$.  It follows that $G^{\mathrm{der}} \leq RS(\GL_3 \times \GL_3)$.  On the other hand, since the representation $\rho$ is irreducible, $\rho(Z_G)$ must be contained in the subgroup of scalar matrices, and hence $G \leq RS(\GL_3 \times \GL_3)$.
For the last case $\mathrm{Lie}(G^{\mathrm{der}})=\mathfrak{so}_9$, we let $V$ be the standard representation of $\mathfrak{so}_9$. Then from the decomposition of $\Sym^3V$ into the direct sum of Schur functors \cite[Excercise 19.21(ii)]{FH}, one knows that there is no trivial representation occurring in the decomposition. This completes our proof.
\end{proof} 
 
Lastly we want to note that it would be interesting to investigate irreducible non-connected reductive subgroups of $\GL_9$, since the rank is not so big. This would require finite group theory and Clifford theory.

\section*{Acknowledgements}

The author is grateful to J. R. Getz for suggesting this project and to Leslie Saper for answering various questions on representation theory. The author also thanks A. J. Yee for a useful conversation on the Gaussian polynomials.

% ----------------------------------------------------------------

\end{document}